\newcommand{\Ric}{\mathrm{Ric}}
\newcommand{\tp}{\tilde\times_\rho}
\newcommand{\tpt}{\tilde\times_{\tilde\rho}}
\newcommand{\Z}{\mathbb{Z}}
\newcommand{\R}{\mathbb{R}}
\newcommand{\C}{\mathbb{C}}
\renewcommand{\P}{\mathbb{P}}
\newcommand{\K}{\mathcal{K}}
\newcommand{\F}{\mathcal{F}}
\newcommand{\<}{\langle}
\renewcommand{\>}{\rangle}
\renewcommand{\phi}{\varphi}
\newcommand{\na}{\nabla}
\newcommand{\scal}{\mathrm{scal}}
\newcommand{\vol}{\mathrm{vol}}
\newcommand{\Iso}{\mathrm{Iso}^+}
\newcommand{\id}{\mathrm{id}}
\newcommand{\dm}{d^\nabla_-}
\newcommand{\Dm}{\mathcal{D}^-}
\newcommand{\sameauthor}{\rule{15mm}{0.5pt}: }
\newtheorem{thm}{Theorem}
\newtheorem*{thmnn}{Theorem}
\newtheorem*{cor}{Corollary}
\newtheorem{lem}{Lemma}
\theoremstyle{definition}
\newtheorem*{rem}{Remark}
\title[Geometrically formal $4$-manifolds with nonnegative curvature]{Geometrically formal $4$-manifolds with nonnegative sectional curvature}
\author{Christian B\"ar}
\address{Universit\"at Potsdam, Institut f\"ur Mathematik, Am Neuen Palais 10, 14469 Potsdam, Germany}
\email{baer@math.uni-potsdam.de}
\urladdr{http://geometrie.math.uni-potsdam.de/}
\keywords{Geometric formality, Hopf conjecture, nonnegative sectional curvature, positive sectional curvature, 4-manifold}
\subjclass[2010]{Primary 53C25; Secondary 53C20, 53C21}
\begin{document}

\begin{abstract}
A Riemannian manifold is called geometrically formal if the wedge product of any two harmonic forms is again harmonic.
We classify geometrically formal compact $4$-manifolds with nonnegative sectional curvature.
If the sectional curvature is strictly positive, the manifold must be homeomorphic to $S^4$ or diffeomorphic to $\C\P^2$.

This conclusion stills holds true if the sectional curvature is strictly positive and we relax the condition of geometric formality to the requirement that the length of harmonic $2$-forms is not too nonconstant.
In particular, the Hopf conjecture on $S^2\times S^2$ holds in this class of manifolds.
\end{abstract}

\maketitle

\section{Introduction}

Compact Riemannian manifolds with positive sectional curvature are still poorly understood in the sense that, on the one hand, one knows relatively few examples and, on the other hand, most known topological obstructions against existence of a metric of positive curvature are in fact obstructions against weaker curvature conditions.
The only known connected orientable compact positively curved $4$-manifolds are $S^4$ and $\C\P^2$ and it has been conjectured that no further example exist \cite[p.~4]{Z}.
At the moment, a proof of this conjecture seems out of reach.
It would imply in particular that $S^2\times S^2$ does not carry a positively curved metric, a conjecture attributed to H.~Hopf.
Even the Hopf conjecture turned out to be notoriously difficult and is open to date.
Most currently available classification results for positively or nonnegatively curved manifolds require additional assumptions like a sufficiently high degree of symmetry. 
We refer to the beautiful surveys \cite{Wi,Z} for more on this.
In the present article we will consider nonnegatively curved $4$-manifolds which are geometrically formal.
In Theorem~\ref{thm:norm} the condition of geometric formality will be relaxed.
In Theorem~\ref{thm:alt} we consider positively curved $4$-manifolds which are not too nonsymmetric in the sense that the covariant derivative of their curvature tensor satisfies a suitable estimate.

A Riemannian manifold $M$ is called \emph{geometrically formal} if the wedge product of any two harmonic forms $M$ is again harmonic.
One motivation for studying such manifolds comes from the fact that geometrically formal manifolds are formal in the sense of Sullivan \cite[p.~43]{S}.
For a nice introduction to geometrically formal manifolds see \cite{K1}.

Let $M$ be a connected oriented geometrically formal $n$-manifold.
Given harmonic $k$-forms $\alpha$ and $\beta$ on $M$, the $n$-form $\alpha\wedge *\beta=\<\alpha,\beta\>\vol$ is again harmonic.
Hence the pointwise scalar product $\<\alpha,\beta\>$ is constant.
In particular, harmonic forms have constant length and cannot have zeros unless they are identically zero.

From this one easily deduces that the (real) Betti numbers are bounded by \mbox{$b_k(M) \le b_k(T^n)$} and, if $n$ is divisible by $4$, $b^\pm_{n/2}(M) \le b^\pm_{n/2}(T^n)$.
Here $T^n=\R^n/\Z^n$ denotes the $n$-torus.
Moreover, $b_1(M)\neq n-1$ so that $b_1(M) \in\{0,1,2,\ldots,n-2,n\}$, see \cite{K1} for details.

The purpose of this paper is to classify geometrically formal $4$-manifolds with nonnegative sectional curvature. 
In $3$ dimensions one has a good understanding of nonnegatively curved manifolds.
R.~Hamilton proved that any connected compact oriented $3$-dimensional Riemannian manifold with $\Ric\ge 0$ is diffeomorphic to a quotient of $S^3$ or $S^2\times\R$ or $\R^3$ by a group of fixed point free isometries in the standard metrics (\cite[Thm.~1.2]{Ham86}).
We will need a more precise formulation of this result:

\begin{thmnn}[Hamilton]\label{thm:3D}
Let $M$ be a connected compact oriented $3$-dimensional Riemannian manifold.
Suppose that the Ricci curvature of $M$ satisfies $\Ric\ge 0$.
Then one of the following holds:
\begin{enumerate}
\item\label{c:3D:sphere}
$M$ is diffeomorphic to a spherical spaceform or to $\R\P^3\sharp\R\P^3$;
\item\label{c:3D:twisted}
$M$ is isometric to a twisted product $S^2\tp S^1$ where $S^2$ carries a metric of nonnegative curvature;
\item \label{c:3D:flat}
$M$ is flat.
\end{enumerate}
\end{thmnn}

By a spherical spaceform we mean a manifold of the form $M=\Gamma\backslash S^n$ where \mbox{$\Gamma\subset\mathrm{O}(n+1)$} is a finite subgroup acting freely on $S^n$.
Spherical spaceforms and flat manifolds in $3$ dimensions are classified, see \cite[Thm.~3.5.5 and p.~224]{W} for the lists.
Twisted products will be explained in the next section.
The difference to Hamilton's formulation is that in Cases \eqref{c:3D:twisted} and \eqref{c:3D:flat} we not only have information about the diffeomorphism type but also about the metric.
In Section~\ref{sec:3D} we will show how to derive this version of Hamilton's theorem from the one given in \cite{Ham86}.
We give the proof because it introduces methods which will be crucial for treating the $4$-dimensional case.

In $4$ dimensions we show:

\begin{thm}\label{thm:4D}
Let $M$ be a connected compact oriented $4$-dimensional Riemannian manifold.
Suppose that $M$ is geometrically formal and the sectional curvature satisfies $K\ge 0$.
Then one of the following holds:
\begin{enumerate}
\item \label{c:4D:sphere}
$M$ is a rational homology $4$-sphere with finite fundamental group;
\item \label{c:4D:CP2}
$M$ is diffeomorphic to $\C\P^2$;
\item \label{c:4D:flat}
$M$ is flat;
\item \label{c:4D:twisted22}
$M$ is isometric to a twisted product $S^2\tp T^2$ where $T^2$ carries a flat metric and $S^2$ carries a metric of nonnegative curvature;
\item \label{c:4D:twisted31}
$M$ is isometric to a twisted product $\Sigma^3\tp S^1$, where $\Sigma^3$ is isometric to a spherical spaceform or to $\R\P^3\sharp\R\P^3$ with a metric satisfying $K\ge0$;
\item \label{c:4D:prod}
$M$ is isometric to $S^2\times S^2$ with product metric where both factors carry metrics with nonnegative curvature.
\end{enumerate}
\end{thm}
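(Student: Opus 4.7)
The plan is to proceed by case analysis on $b_1(M)$, which by the introduction lies in $\{0,1,2,4\}$.

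\emph{Cases $b_1\in\{1,2,4\}$.} Geometric formality forces harmonic $1$-forms to have constant length, so combined with $\Ric\ge 0$ the Bochner formula makes every harmonic $1$-form parallel. This produces $b_1(M)$ linearly independent parallel vector fields. The de Rham theorem applied to the universal cover, together with the Cheeger--Gromoll structure theorem, then organizes $M$ as a twisted product of a flat $b_1$-dimensional torus with a complementary factor of nonnegative curvature. For $b_1=4$ the metric is flat (Case~(3)); for $b_1=2$ the complementary factor is a $2$-manifold with $K\ge 0$, either $S^2$ (Case~(4)) or flat $T^2$ (Case~(3)); for $b_1=1$ the complementary factor is a compact oriented $3$-manifold with $\Ric\ge 0$, and Corollary~\ref{thm:3D} yields Case~(3) or Case~(5).

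\emph{Case $b_1=0$.} Poincar\'e duality gives $b_3=0$, so everything hinges on $b_2(M)$. If $b_2=0$, then $M$ is a rational homology $4$-sphere; Cheeger--Gromoll implies $\tilde M$ has no Euclidean de Rham factor (since $b_1=0$), hence $\tilde M$ is compact and $\pi_1(M)$ is finite, which is Case~(1). If $b_2\ge 1$, pick a nonzero harmonic $2$-form and decompose it as $\omega=\omega^++\omega^-$ with $\omega^\pm=\tfrac12(\omega\pm*\omega)$. Geometric formality applied to $\omega^\pm\wedge\omega^\pm=\pm|\omega^\pm|^2\vol$ shows $|\omega^\pm|^2$ is constant, and a nonvanishing closed coclosed self-dual $2$-form of constant length defines an almost K\"ahler structure on $M$. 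The Weitzenb\"ock formula for self-dual $2$-forms, combined with $K\ge 0$, supplies the required rigidity: if $b_2^+\ge 2$ (or $b_2^-\ge 2$), geometric formality applied to wedge products of same-duality harmonic forms gives constant pointwise inner products, from which one builds a constant-length orthogonal frame of harmonic self-dual forms; the ensuing splitting forces $M$ isometric to $S^2\times S^2$ with product metric (Case~(6)). If $b_2^+=1$ and $b_2^-=0$ (or vice versa), we are in a K\"ahler setting with nonnegative holomorphic sectional curvature, and the classification of such K\"ahler surfaces identifies $M$ diffeomorphically with $\C\P^2$ (Case~(2)).

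\emph{Main obstacle.} The subcase $b_1=0$, $b_2\ge 1$ is the hard step: converting the pointwise constancy of harmonic $2$-form lengths coming from geometric formality, together with $K\ge 0$, into the global rigidity that separates Cases~(1), (2) and~(6). The Bochner and splitting arguments settling $b_1\ge 1$ are classical, but the self-dual $2$-form analysis requires the full interplay of geometric formality with Weitzenb\"ock identities in dimension~$4$.
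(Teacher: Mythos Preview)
Your case split on $b_1$ is right, and the $b_1\ge 1$ cases are essentially the paper's argument (via the Bochner formula and Lemma~\ref{lem:twisted}). But there is one omission: when $b_1=1$, the Corollary to Hamilton's theorem also allows $\Sigma\cong S^2\tp S^1$, which fits neither Case~(3) nor Case~(5). The paper rules this out by an explicit Betti-number / Euler-characteristic contradiction (the parallel area form of the $S^2$-foliation would force $b_2(M)\ge 1$, while $\chi(M)=0$ and $b_1=1$ force $b_2(M)=0$). You need an argument here.

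The $b_1=0$ analysis has more serious problems. First, $S^2\times S^2$ has $b_2^+=b_2^-=1$, not $b_2^+\ge 2$; so the case you omit, $b_2^+=b_2^-=1$, is exactly the one yielding Case~(6). The paper handles it by observing that $\eta=\omega^++\omega^-$ (with $|\omega^\pm|\equiv 1$) is \emph{decomposable}, then applying the Bochner formula with the curvature term expressed via sectional curvatures (Appendix~\ref{sec:BW}) to conclude $\eta$ and $\omega^+-\omega^-$ are parallel, giving a parallel splitting $TM=E_1\oplus E_2$; finiteness of $\pi_1$ (via $\chi(M)=4$ and Cheeger--Gromoll) then forces $M=S^2\times S^2$. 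Second, the case $b_2^+\ge 2$ does not produce $S^2\times S^2$ by any ``frame of selfdual forms'' argument---it is simply excluded by Kotschick's result that $b_2^\pm\le 1$ for geometrically formal $4$-manifolds with $b_1=0$. Third, in the subcase $b_2^+=1$, $b_2^-=0$, the nowhere-vanishing selfdual harmonic $2$-form gives only an \emph{almost} K\"ahler (i.e.\ symplectic) structure; integrability is not available, so you cannot invoke a classification of K\"ahler surfaces with nonnegative holomorphic sectional curvature. The paper instead uses that $M$ is symplectic with a metric of positive scalar curvature (since $\chi(M)=3$ rules out flatness) and appeals to the Seiberg--Witten classification of such manifolds (Liu, Ohta--Ono), which among rational homology $\C\P^2$'s singles out $\C\P^2$.
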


All cases in Theorem~\ref{thm:4D} do actually occur.
More precisely, the manifolds in Cases~\eqref{c:4D:flat}--\eqref{c:4D:prod} are geometrically formal and satisfy $K\ge0$.
Since flat manifolds and spherical spaceforms are classified, these cases are completely understood.

The standard sphere $S^4$ and $\C\P^2$ with the Fubini-Study metric are also geometrically formal and have strictly positive sectional curvature.
In fact, every metric on $S^4$ is geometrically formal;
hence we can replace the standard metric by any metric with $K\ge0$.
Thus Cases~\eqref{c:4D:sphere} and \eqref{c:4D:CP2} also occur.
In a previous version of the present article, the statements in Cases~\eqref{c:4D:sphere} and \eqref{c:4D:CP2} were weaker.
D.~Kotschick pointed out to the author that a corollary to the Cheeger-Gromoll splitting theorem implies finiteness of the fundamental group in Case~\eqref{c:4D:sphere}.
In Case~\eqref{c:4D:CP2}, it was originally only concluded that $M$ has the homology type of $\C\P^2$.
Here D.~Kotschick pointed out that the classification of symplectic $4$-manifolds with a metric of positive scalar curvature improves the result to its present form, compare also the proof of Theorem~8 in \cite{K3}.

Since the manifolds in Cases \eqref{c:4D:flat}, \eqref{c:4D:twisted22}, and \eqref{c:4D:twisted31} in Theorem~\ref{thm:4D} are not simply connected, we obtain:

\begin{cor}%\label{thm:4D1}
Let $M$ be a simply connected compact oriented $4$-dimensional Riemannian manifold.
Suppose that $M$ is geometrically formal and the sectional curvature satisfies $K\ge0$.
Then one of the following holds:
\begin{enumerate}
\item \label{c:4D1:sphere}
$M$ is homeomorphic to $S^4$;
\item \label{c:4D1:CP2}
$M$ is diffeomorphic to $\C\P^2$;
\item \label{c:4D1:prod}
$M$ is isometric to $S^2\times S^2$ with product metric where both factors carry metrics with nonnegative curvature.
\end{enumerate}
\end{cor}

Note that the statement in Case~\eqref{c:4D1:sphere} is stronger than the corresponding one in Theorem~\ref{thm:4D}; 
isomorphic rational homology has been improved to homeomorphism.
A justification for this will be given in the proof of Theorem~\ref{thm:4D}, see Section~\ref{ssec:00}.
One might conjecture that even in Theorem~\ref{thm:4D} the conclusion in the first case should be that $M$ is diffeomorphic to $S^4$.

One may compare the corollary to an older result by B.~Kleiner where instead of geometric formality one assumes that there are sufficiently many isometries.
In this case there are two further possible homeomorphism types.
He shows (\cite[Thm.~1.0.2]{Kl} and \cite[Thm.~1]{SY}) that a simply connected compact oriented $4$-dimensional Riemannian manifold with $K\ge0$ which has a nontrivial isometric $\mathrm{U}(1)$-action must be homeomorphic to $S^4$, to $\C\P^2$, to $S^2\times S^2$, to $\C\P^2\sharp\C\P^2$ or to $\C\P^2\sharp\overline{\C\P}^2$.
Here $\overline{\C\P}^2$ denote the complex projective plane with the reversed orientation (the one not induced by the complex structure).
Using work of Fintushel \cite{Fi} one can replace ``homeomorphic'' by ``diffeomorphic'' in this theorem, compare e.g.\ \cite{GG} and the references given therein.

If the curvature is strictly positive, we can weaken the assumption of geometric formality quite a bit.
Recall that on a geometrically formal manifold all harmonic forms have constant length.
It suffices to assume that the length of harmonic $2$-forms is not ``too nonconstant''.
More precisely, we get

\begin{thm}\label{thm:norm}
Let $M$ be a connected compact oriented $4$-dimensional Riemannian manifold.
Suppose that the sectional curvature satisfies $K\ge \kappa >0$ and all harmonic $2$-forms $\omega$ satisfy $|d|\omega||\leq \sqrt{8\kappa}\cdot |\omega|$ wherever $\omega$ does not vanish.

Then $M$ is homeomorphic to $S^4$ or diffeomorphic to $\C\P^2$.
\end{thm}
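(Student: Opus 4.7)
My plan is to adapt Seaman's argument from the constant-length hypothesis $d|\omega|\equiv 0$ to the quantitative bound $|d|\omega||\le\sqrt{8\kappa}\,|\omega|$. Synge's theorem makes $M$ simply connected. The target is the dichotomy: \emph{either $\mathcal{H}^+(M)=0$ or $\mathcal{H}^-(M)=0$}, where $\mathcal{H}^\pm$ are the spaces of harmonic self-dual / anti-self-dual $2$-forms. Once this dichotomy is established, the rest is standard: if both spaces vanish then $b_2(M)=0$, so $M$ is a simply-connected integral homology $4$-sphere and Freedman yields $M$ homeomorphic to $S^4$; otherwise the intersection form of $M$ is definite, making $M$ symplectic with positive scalar curvature, and the classification of such $4$-manifolds already invoked in the paper for Theorem~\ref{thm:4D}~(\ref{c:4D:CP2}) identifies $M$ with $\C\P^2$ up to diffeomorphism.

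The analytic core is therefore the dichotomy. Let $\omega\in\mathcal{H}^+$ be nonzero. In dimension $4$ the Bochner--Weitzenb\"ock identity on self-dual harmonic $2$-forms reads
$$\tfrac{1}{2}\Delta|\omega|^2\ =\ |\nabla\omega|^2\ +\ \tfrac{\scal}{3}|\omega|^2\ -\ 2W^+(\omega,\omega),$$
and on $\{\omega\ne 0\}$ the refined Kato inequality $|\nabla\omega|^2\ge 2|d|\omega||^2$ for harmonic self-dual $2$-forms in dimension four (Calderbank--Gauduchon--Herzlich) rewrites this as
$$|\omega|\,\Delta|\omega|\ \ge\ |d|\omega||^2\ +\ \tfrac{\scal}{3}|\omega|^2\ -\ 2W^+(\omega,\omega).$$
The hypothesis $K\ge\kappa$ gives $\scal\ge 12\kappa$ pointwise and, combined with the canonical normal form of a self-dual $2$-form, an algebraic control of $W^+(\omega,\omega)$ in terms of sectional curvatures. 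Integrating over $M$ with $\int_M|\omega|\Delta|\omega|=-\int_M|d|\omega||^2$ converts the pointwise estimate into an integral inequality which, after substituting $|d|\omega||^2\le 8\kappa|\omega|^2$ at its sharp numerical value, forces either $\omega\equiv 0$ (excluding $\mathcal{H}^+\ne 0$) or simultaneous saturation of the Kato and curvature estimates.

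Saturation implies $\nabla\omega=0$. Then $|\omega|$ is constant, so after rescaling $\omega$ defines a parallel orthogonal almost-complex structure compatible with the orientation, making $(M,g)$ K\"ahler, hence symplectic with positive scalar curvature, and therefore diffeomorphic to $\C\P^2$ by the same classification result as above. In particular $\mathcal{H}^-(M)=0$, completing the dichotomy and the proof.

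The main obstacle is the pointwise algebraic bound on the Weyl term $W^+(\omega,\omega)$. The self-dual Weyl tensor is not controlled by sectional curvatures in any naive way: on $\C\P^2$ with the Fubini--Study metric, the parallel K\"ahler form satisfies $W^+(\omega,\omega)=\tfrac{\scal}{6}|\omega|^2$, which is precisely the value at which $\tfrac{\scal}{3}|\omega|^2-2W^+(\omega,\omega)$ vanishes. So the Weyl contribution must be compared to this K\"ahler-extremal case rather than bounded below by a multiple of $\kappa|\omega|^2$. Balancing this comparison against the refined Kato constant $2$ and the scalar-curvature bound $\scal\ge 12\kappa$ so that the threshold $\sqrt{8\kappa}$ emerges \emph{exactly} is the technical heart of the theorem and the precise point where Seaman's stronger hypothesis $|d|\omega||\equiv 0$ is relaxed.
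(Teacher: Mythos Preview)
Your proposal has a genuine gap, and it is precisely the one you identify yourself as ``the main obstacle'' without resolving. Working with a single self-dual harmonic form $\omega$ and the Weitzenb\"ock formula on $\Lambda^+$, the curvature term is $\tfrac{\scal}{3}|\omega|^2 - 2W^+(\omega,\omega)$, and the self-dual Weyl curvature $W^+$ is \emph{not} controlled from above by a lower sectional curvature bound. On $\C\P^2$ this term vanishes, as you note; but nothing prevents it from being strictly negative on a positively curved manifold, and there is no ``comparison to the K\"ahler-extremal case'' that turns this into a usable inequality. Substituting $|d|\omega||^2\le 8\kappa|\omega|^2$ into your integrated inequality goes the wrong way: you need a \emph{lower} bound on the curvature term to force saturation, not an upper bound on $|d|\omega||^2$. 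Your argument as written therefore does not close, and I do not see how to close it along these lines. (A smaller issue: the refined Kato constant for harmonic (anti)self-dual $2$-forms in dimension four is $\sqrt{2/3}$, i.e.\ $|\na\omega|^2\ge\tfrac32|d|\omega||^2$, not $2$; see Seaman's paper cited in the bibliography.)

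The paper's proof avoids the Weyl obstruction by a fundamentally different device. It assumes for contradiction that both $b_2^+>0$ and $b_2^->0$, picks nontrivial $\omega^+\in\mathcal H^+$ and $\omega^-\in\mathcal H^-$, and works with the tensor product $\omega^+\otimes\omega^-$ as a section of $\Lambda^+T^*M\otimes\Lambda^-T^*M$, applying the twisted Dirac operator $\Dm$ on $\Lambda^*T^*M\otimes\Lambda^-T^*M$. The point is that the form $\eta=|\omega^-|\,\omega^+ + |\omega^+|\,\omega^-$ is \emph{decomposable} (since $\eta\wedge\eta=0$), and for decomposable $2$-forms the Bochner curvature term $\<\K\eta,\eta\>$ is a sum of four sectional curvatures with no Weyl contribution, giving $\<\K\eta,\eta\>\ge 4\kappa|\eta|^2 = 8\kappa|\omega^+|^2|\omega^-|^2$. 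The Weitzenb\"ock formula for $\Dm$, combined with the refined Kato inequality and the hypothesis on $|d|\omega^\pm||$, then produces a pair of integral inequalities (one from each choice of which factor is twisted) whose sum forces equality throughout. Equality in turn gives $|d|\omega^+||\equiv\sqrt{8\kappa}\,|\omega^+|$, contradicting the existence of a maximum of $|\omega^+|$. A preliminary Gronwall argument shows that the hypothesis also forces $\omega^\pm$ to be nowhere zero, which is needed to make the manipulations with $|\omega^\pm|$ legitimate. The coupling of $\omega^+$ with $\omega^-$ is not an incidental convenience but the mechanism that eliminates $W^\pm$ from the estimate.
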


In particular, $S^2\times S^2$ is ruled out and the Hopf conjecture holds in this class of manifolds.
We recover a result by W.~Seaman \cite{S1} stating that $M$ must be homeomorphic to $S^4$ or to $\C\P^2$ if the harmonic $2$-forms have constant length.

Also in the positively curved case there is an analogous result if one demands the existence of sufficiently many isometries.
Hsiang and Kleiner \cite{HK} showed that a connected compact oriented $4$-dimensional Riemannian manifold with $K>0$ which has a nontrivial isometric $\mathrm{U}(1)$-action must be homeomorphic to $S^4$ or to $\C\P^2$.
Again, Fintushel's results can be used to improve the conclusion from ``homeomorphic'' to ``diffeomorphic''.

Theorems~\ref{thm:4D} and \ref{thm:norm} will be proved in Sections~\ref{sec:4D} and \ref{sec:norm}, respectively.
The proof of Theorem~\ref{thm:norm} shows that the assumptions can be weakened as follows:
The estimate on $d|\omega|$ need not be demanded for \emph{all} harmonic $2$-forms but only for \emph{at least one} nontrivial selfdual harmonic \mbox{$2$-form} if \mbox{$b_2^+(M)>0$} and for at least one nontrivial antiselfdual harmonic $2$-form if \mbox{$b_2^-(M)>0$}.

A prominent class of geometrically formal manifolds is given by symmetric spaces.
They have parallel curvature tensor, $\nabla R=0$.
If we demand that the derivative of the curvature tensor is not too large, then again $S^2\times S^2$ is not possible.
In other words, a counterexample to the Hopf conjecture must be very nonsymmetric.
In fact, a bound on the differential of the scalar curvature and on the selfdual or antiselfdual Weyl curvature suffice.

To formulate the result we specify the norm on the relevant tensor spaces:
For any $(0,k)$-tensor field (e.g., any $k$-form) $B$ we consider
\[
|B|_{(k)} = \sup \frac{|B(X_1,\ldots,X_k)|}{|X_1|\cdots|X_k|}
\]
where the supremum is taken over all nonzero vectors $X_1,\ldots,X_k$.
Now we consider the Riemann curvature tensor $R$ and the Weyl curvature $W$ as $(0,4)$-tensors and their covariant derivatives $\nabla R$ and $\nabla W$ as $(0,5)$-tensors.
The selfdual part of $W$ is denoted by $W^+$ and its antiselfdual part by $W^-$.
Now the result is

\begin{thm}\label{thm:alt}
Let $M$ be a connected compact oriented $4$-dimensional Riemannian manifold with sectional curvature $K\ge 1$.
Suppose that $$|\na W^+|_{(5)} + \frac{1}{12}|d\scal|_{(1)}\le \frac{4}{\pi}.$$

Then $M$ is homeomorphic to $S^4$ or to $\underbrace{\C\P^2\sharp\cdots\sharp\C\P^2}_k$ with $1\le k\le 10^{238}$.
\end{thm}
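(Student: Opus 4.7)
The plan is to combine a Weitzenb\"ock vanishing argument for antiselfdual harmonic $2$-forms with Gromov's Betti number theorem and Freedman's classification of simply connected topological $4$-manifolds.

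\emph{Preliminaries.} From $K\ge 1$, Myers' theorem yields $\mathrm{diam}(M)\le\pi$, and Synge's theorem gives $\pi_1(M)=0$ (since $M$ is oriented, even-dimensional, and positively curved).

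\emph{Vanishing of $b_2^-$.} The key step is to show that every antiselfdual harmonic $2$-form $\omega$ vanishes identically. The Weitzenb\"ock formula reads $\na^*\na\omega=2W^-\omega-\tfrac{\scal}{3}\omega$; integrating against $\omega$ yields
\[
\int_M |\na\omega|^2 \;=\; \int_M\!\Bigl(2\<W^-\omega,\omega\> - \tfrac{\scal}{3}|\omega|^2\Bigr).
\]
Although the hypothesis controls $\na W^+$ and $d\,\scal$ rather than $W^-$ itself, the second Bianchi identity couples $\mathrm{div}(W^+)$ and $\mathrm{div}(W^-)$ via $\na\Ric$, whose trace is $d\,\scal$. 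Integrating $|\na W^+|+\tfrac{1}{12}|d\,\scal|\le 4/\pi$ along minimizing geodesics of length $\le\pi$ gives oscillation at most $4$ for $W^+$ (and via Bianchi, a controlled size for $W^-$) and at most $48$ for $\scal$. Combined with the uniform lower bound $\scal\ge 12$ coming from $K\ge 1$, and a pointwise estimate of $\<W^-\omega,\omega\>$ in terms of sectional curvatures of antiselfdual planes, one concludes that the integrand on the right is strictly negative wherever $\omega\ne 0$, forcing $\omega\equiv 0$.

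\emph{Topological classification and Betti number bound.} Since $M$ is simply connected and $b_2^-(M)=0$, the intersection form of $M$ is positive definite. By Donaldson's theorem it diagonalizes over $\Z$, so Freedman's classification identifies $M$ up to homeomorphism with $S^4$ (when $b_2=0$) or with $\C\P^2\sharp\cdots\sharp\C\P^2$ for $k=b_2(M)\ge 1$ summands. Gromov's Betti number theorem, applied with $K\ge -1$ and $\mathrm{diam}\le\pi$, then gives an explicit bound $b_2(M)\le 10^{238}$.

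\emph{Main obstacle.} The crux is the vanishing of $b_2^-$. The hypothesis provides one-sided control on curvature derivatives (on $\na W^+$ and $d\,\scal$ only), whereas the Weitzenb\"ock term for \emph{antiselfdual} harmonic forms involves $W^-$. Translating the one-sided bound into a two-sided vanishing theorem requires careful use of the second Bianchi identity together with precise tracking of constants: the threshold $4/\pi$ is chosen so that $\pi\cdot(4/\pi)=4$ matches the Myers diameter $\pi$ and the Weitzenb\"ock cancellation is saturated at equality on the rigid models $S^4$ and $\C\P^2$.
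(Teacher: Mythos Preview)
Your outer framework---Synge and Myers at the start, Donaldson, Freedman and the Gromov--Abresch Betti bound at the end---matches the paper and is fine. The gap is exactly where you flag it yourself: the passage from the hypothesis on $\na W^+$ and $d\,\scal$ to a vanishing theorem for antiselfdual harmonic $2$-forms. Your proposed bridge via the second Bianchi identity does not work. In dimension~$4$ the Bianchi identity gives $\delta W^+=\delta W^-$ (both equal a Cotton-type tensor built from $\na\mathring{\Ric}$), so a bound on $|\na W^+|$ yields a bound on $|\delta W^-|$, but that is far from a bound on $|W^-|$ or on its oscillation: there are compact $4$-manifolds with $\delta W^-\equiv 0$ and $W^-$ large and nonvanishing. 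Consequently the integrand $2\<W^-\omega,\omega\>-\tfrac{\scal}{3}|\omega|^2$ in your Weitzenb\"ock inequality cannot be shown to have a sign from the assumptions.

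The paper avoids this entirely by never trying to control $W^-$. The point is that the Weitzenb\"ock curvature endomorphism $\K$ restricted to $\Lambda^+T^*M$ depends only on $W^+$ and $\scal$; hence its \emph{covariant derivative} on selfdual forms is controlled by $|\na W^+|_{(5)}$ and $|d\,\scal|_{(1)}$. Concretely, one computes for any unit $X$ and any selfdual $\omega$ that
\[
\frac{|\<(\na_X\K)\omega,\omega\>|}{|\omega|^2}\;\le\;4\,|\na W^+|_{(5)}+\frac{1}{3}\,|d\,\scal|_{(1)}\;\le\;\frac{16}{\pi}.
\]
The result of \cite{B} is precisely that this derivative bound, together with $K\ge 1$ (hence diameter $\le\pi$), forces the intersection form to be definite. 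So the hypothesis is tailored to bound $\na_X\K$ on $\Lambda^+$, not $\K$ on $\Lambda^-$; your attempt to push the information across to $W^-$ is the wrong direction and cannot be completed.
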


Of course, the upper bound on $k$ in this theorem is far from optimal.
One would rather expect that $M$ has to be homeomorphic to $S^4$ or to $\C\P^2$.
By reversing the orientation of $M$, one can replace $W^+$ by $W^-$ in the assumption.
Note that there is no claim in the statement that the homeomorphism between $M$ and $\C\P^2\sharp\cdots\sharp\C\P^2$ has to be orientation preserving.
Theorem~\ref{thm:alt} may be compared to a result by M.~H.~Noronha \cite[Cor.~1]{N} where it is assumed that $\scal$ and the pointwise norm of $W^+$ are constant.

The main step for the proof of Theorem~\ref{thm:alt} was done in \cite{B}.
In Section~\ref{sec:alt} we will show how to derive the formulation given in Theorem~\ref{thm:alt} from the work in \cite{B}.

In \cite{K3} D.~Kotschick started to investigate low-dimensional geometrically formal manifolds which admit a (possibly different) metric of nonnegative scalar curvature and he obtained analogous classification results.

\emph{Acknowledgments.}
It is my pleasure to thank B.~Hanke, B.~Wilking, and W.~Ziller for helpful discussions.
I am grateful to D.~Kotschick for pointing out an error in the first version of this paper and for many very useful hints and references.
Moreover, I thank the \emph{Sonderforschungsbereich 647} funded by \emph{Deutsche Forschungsgemeinschaft} for financial support.

\section{Preliminaries}

\subsection{Abel-Jacobi map}
Let $M$ be a connected compact Riemannian manifold.
We assume that all harmonic $1$-forms on  $M$ have constant length.
This holds, for instance, if $M$ is geometrically formal or if $M$ has nonnegative Ricci curvature.
By polarization, all pointwise scalar products of harmonic $1$-forms are also constant.
Let $b=b_1(M)$.

Choose a basis $\{\theta_1,\ldots,\theta_b\}$ of the space of harmonic $1$-forms by forms with \emph{integral periods}.
Then there exist functions $F_j:M\to \R/\Z$ such that $dF_j=\theta_j$.
By \cite[Prop.~6.3]{BK},
% or \cite[Prop.~16.7.3]{Ka}, 
the map $F=(F_1,\ldots,F_b):M\to T^b$ is a Riemannian submersion with minimal fibers.
Here $T^b=\R^b/\Z^b$ carries a flat Riemannian metric determined by the (constant) scalar products $\<\theta_i,\theta_j\>$.
The submersion $F$ is known as the \emph{Abel-Jacobi map}.
If $M$ is compact, Ehresmann's fibration theorem implies that the Abel-Jacobi map $F:M\to T^b$ is a fiber bundle map.
Moreover, $F^*:H^1(T^b,\R) \to H^1(M,\R)$ is an isomorphism.

\subsection{Twisted products}
Let $\Sigma$ be an oriented Riemannian manifold and denote by $\Iso(\Sigma)$ the group of orientation-preserving isometries of $\Sigma$.
Let $V$ be a \mbox{$b$-dimensional} Euclidean vector space and let $\Gamma\subset V$ be a lattice.
Let \mbox{$\rho:\Gamma\to\Iso(\Sigma)$} be a homomorphism.
Then $\Gamma$ acts isometrically on $\Sigma\times V$ by $\rho$ on the first factor and by translations on the second.
We denote the quotient $(\Sigma\times V)/\Gamma$ by $\Sigma\tp T^b$ and call it a \emph{twisted product}.
If $b=1$, then $\Sigma\tp \R/\Z$ is also known as the mapping torus of the map $\rho(1)$.
The projection onto the second factor $\Sigma\times V\to V$ induces a Riemannian submersion $\Sigma\tp T^b\to T^b$ with totally geodesic fibers isometric to $\Sigma$.
The torus $T^b=V/\Gamma$ carries the induced flat metric.
In case $\rho$ is trivial, the twisted product is just the usual Riemannian product, $\Sigma\tp T^b=\Sigma\times T^b$.

The following folklore lemma will be needed as a technical tool.

\begin{lem}\label{lem:twisted}
Let $M$ be a connected compact oriented Riemannian manifold.
Suppose that all harmonic $1$-forms on $M$ are parallel.

Then $M$ is isometric to a twisted product $\Sigma\tp T^b$ where $\Sigma$ is a connected compact oriented Riemannian manifold and $b=b_1(M)$.
\end{lem}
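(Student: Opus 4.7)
The plan is to apply de Rham's decomposition theorem to the universal cover and then identify the deck transformation action with a lattice action giving the twisted product structure. To begin, I would choose a basis $\theta_1,\dots,\theta_b$ of harmonic $1$-forms with integral periods; by hypothesis each $\theta_j$ is parallel, hence so is its dual vector field and the $b$-dimensional distribution they span. The orthogonal complement is automatically parallel as well. Since $M$ is compact, its universal cover $\tilde M$ is complete, and on $\tilde M$ the pulled-back distributions are parallel with trivial holonomy, so de Rham's theorem yields an isometric splitting $\tilde M = \R^b \times \tilde\Sigma$ with $\tilde\Sigma$ connected. The pullbacks are globally exact, $\pi^*\theta_j = d\tilde F_j$, and the map $\tilde F = (\tilde F_1,\dots,\tilde F_b)\colon \tilde M\to\R^b$ realizes the projection onto the first factor up to an affine isomorphism.

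Next, I would analyze how $\pi_1(M)$ acts on $\tilde M$ by deck transformations. Each such isometry preserves the forms $\pi^*\theta_j$, hence preserves the splitting and acts on the $\R^b$-factor only by translation; consequently every $\gamma$ acts as $(v,x)\mapsto (v+t_\gamma,\phi_\gamma(x))$ with $\phi_\gamma\in\Iso(\tilde\Sigma)$, and $t\colon\pi_1(M)\to\R^b$, $\gamma\mapsto t_\gamma$, is a group homomorphism. The integrality of the periods of $\theta_j$ forces $t_\gamma\in\Z^b$, so the image $\Gamma:=t(\pi_1(M))$ is discrete. Linear independence of the $\theta_j$ in $H^1(M,\R)$ shows, via de Rham duality, that the integration pairing $H_1(M,\Z)\otimes\R\to\R^b$ is surjective, so $\Gamma$ has rank $b$ and is a lattice in $\R^b$.

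Finally, letting $N=\ker t$, the subgroup $N$ acts on $\tilde\Sigma$ via $\gamma\mapsto\phi_\gamma$; freeness and proper discontinuity of this action follow from the corresponding properties of $\pi_1(M)$ on $\tilde M$ by restricting to a slice $\{v_0\}\times\tilde\Sigma$. Hence $\Sigma:=\tilde\Sigma/N$ is a connected oriented Riemannian manifold, and $\pi_1(M)/N\cong\Gamma$ descends to a homomorphism $\rho\colon\Gamma\to\Iso(\Sigma)$. The identifications
\[
M \;=\; \tilde M/\pi_1(M) \;=\; (\tilde\Sigma\times\R^b)/\pi_1(M) \;=\; (\Sigma\times\R^b)/\Gamma
\]
then exhibit $M$ as the twisted product $\Sigma\tp T^b$ with $T^b=\R^b/\Gamma$ carrying the induced flat metric, and compactness of $\Sigma$ follows from compactness of $M$ and of $T^b$. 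The main obstacle will be verifying that $\Gamma$ is a full-rank lattice, since this underpins the whole twisted product description; the rest is careful bookkeeping around de Rham's theorem and the product form of isometries.
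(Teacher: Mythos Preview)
Your argument is correct and follows a genuinely different route from the paper's. The paper never passes to the universal cover or invokes de Rham's splitting theorem; instead it works directly on $M$ via the Abel--Jacobi map $F\colon M\to\R^b/\Z^b$ (already set up in the preliminaries) and the flows $\Phi_j$ of the vector fields $v_j$ dual to $\theta_j$. Since the $v_j$ are parallel they are Killing and mutually commuting, so the flows assemble into a homomorphism $\Phi\colon\R^b\to\Iso(M)$ covering the translation action on the torus. The fiber $\widehat\Sigma=F^{-1}([0])$ together with the stabilizer lattice $\widehat\Gamma$ then yields the twisted product, and because $\widehat\Sigma$ may be disconnected the paper passes to a connected component $\Sigma$ and the finite-index sublattice $\Gamma\subset\widehat\Gamma$ preserving it. Your approach trades this for the de Rham decomposition of $\tilde M$ and an analysis of the deck action; what you gain is that $\Sigma=\tilde\Sigma/N$ is automatically connected, so no component-selection step is needed, while the price is having to verify carefully that the period homomorphism $t\colon\pi_1(M)\to\R^b$ has full-rank image (which you correctly single out as the crux) and that the induced $\Gamma$-action on $\Sigma$ is well defined. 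Both proofs ultimately rest on the same algebraic fact about periods, packaged differently.
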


\begin{proof}
Let $\{\theta_1,\ldots,\theta_b\}$ be a basis of the space of harmonic $1$-forms with integral periods.
Let $F:M\to \R^b/\Z^b$ be the induced Abel-Jacobi map.
Denote the fiber of $F$ over $[0]\in \R^b/\Z^b$ by $\widehat\Sigma$.

Denote the metrically dual vector field to $\theta_j$ by $v_j$ and let $\Phi_j:\R\to\mathrm{Diff}(M)$ be its flow.
Since $v_j$ is parallel, it is a Killing vector field, hence $\Phi_j$ acts by orientation preserving isometries, $\Phi_j:\R\to\Iso(M)$.
Define $\Phi:\R^b\to\Iso(M)$ by $\Phi(t_1,\ldots,t_b):=\Phi_1(t_1)\circ\ldots\circ\Phi_b(t_b)$.
Since the vector fields $v_j$ are parallel they commute.
Hence the flows commute so that the order of the flow maps in the definition of $\Phi$ is irrelevant.
In particular, $\Phi$ is a group homomorphism.

The vector fields $v_j$ descend to parallel vector fields $F_*v_j$ on $\R^b/\Z^b$ and hence induce an action of $\R^b$ on $\R^b/\Z^b$ by translations.
Let $\widehat\Gamma\subset\R^b$ be the kernel of this action.
This $\R^b$-action is compatible with the $\R^b$-action on $M$ given by $\Phi$.
Thus $\Phi$ maps fibers of the Abel-Jacobi map to fibers.
In particular, we obtain a homomorphism $\widehat\rho:=\Phi|_{\widehat\Gamma}:\widehat\Gamma\to\Iso(\widehat\Sigma)$.

Now $\widehat\Sigma$ is compact and oriented but need not be connected.
Since $M$ is connected the action of $\widehat\Gamma$ on the set of connected components of $\widehat\Sigma$ is transitive.
In particular, all connected components of $\widehat\Sigma$ are isometric.
Denote one of the connected components of $\widehat\Sigma$ by $\Sigma$.
Let $\Gamma\subset\widehat\Gamma$ be the subgroup of elements mapping $\Sigma$ to $\Sigma$ under $\Phi$.
Then $\Gamma$ is a subgroup of $\widehat\Gamma$ of finite index and hence again a lattice in $\R^b$.
Put $\rho:=\Phi|_{\Gamma}:\Gamma\to\Iso(\Sigma)$.
Equip $\R^b$ with the metric which has the constant coefficients $\<\partial/\partial t_i,\partial/\partial t_j\> = \< v_i,v_j\>$.
Then the map $\Sigma\times\R^b \to M$, $(\sigma,t)\mapsto \Phi(t)(\sigma)$, is a local isometry and induces an isometry $\Sigma\tp (\R^b/\Gamma)\to M$.
\end{proof}

\subsection{Examples of geometrically formal manifolds}
Now we briefly discuss examples of geometrically formal manifolds.
The more cohomology a manifold carries, the more restrictive is the assumption of geometric formality.
If $M$ is diffeomorphic to $S^n$ (or, more generally, to a rational homology sphere), then $M$ is geometrically formal with any Riemannian metric.
If $M$ is diffeomorphic to $T^n$, then $M$ is geometrically formal if and only if $M$ is flat \cite[Thm.~7]{K1}.
All Riemannian symmetric spaces are geometrically formal.
Further examples of homogeneous but nonsymmetric spaces which are geometrically formal can be found in \cite{KT}.
If $M$ is a closed oriented surface of genus $\ge2$, then $M$ does not admit a Riemannian metric making it geometrically formal because every $1$-form must have zeros and therefore cannot have constant length.

If $M_1$ and $M_2$ are geometrically formal, then so is the Riemannian product $M_1\times M_2$.
If $\Sigma$ is a rational homology sphere, then constant functions and constant multiples of the volume form are the only harmonic forms.
In particular, all harmonic forms on $\Sigma$ are parallel.
Similarly, all harmonic forms on a flat torus $T^b$ are parallel.
These forms induce parallel forms on $\Sigma\times\R^b$ which are invariant under the $\Z^b$-action induced by a homomorphism $\rho:\Z^b\to\Iso(\Sigma)$.
Thus they descend to parallel forms on the twisted product $M:=\Sigma\tp T^b$.
By the Leray-Hirsch theorem \cite[Thm.~5.11]{BT} applied to the fibering $\Sigma\hookrightarrow M\to T^b$ there are no further harmonic forms on $M$.
Hence all harmonic forms on the twisted product $M$ are parallel and so $M$ is geometrically formal.

\section{The 3-dimensional case}
\label{sec:3D}

To warm up we consider the $3$-dimensional case and prove the corollary to Hamilton's theorem.
So let $M$ be a connected compact oriented $3$-dimensional Riemannian manifold with nonnegative Ricci curvature, $\Ric\ge0$.
The Bochner formula for $1$-forms tells us that every harmonic $1$-form $\theta$ satisfies
\[
0=(\Delta\theta,\theta) = \|\na\theta\|^2 + (\Ric\theta,\theta) \ge  \|\na\theta\|^2 .
\]
Here $(\cdot,\cdot)$ denotes the $L^2$-scalar product and $\|\cdot\|$ the $L^2$-norm.
Hence every harmonic $1$-form is parallel.
In particular, $b_1(M) \leq 3$.
If there are two linearly independent harmonic (hence parallel) $1$-forms $\theta_1$ and $\theta_2$, then $*(\theta_1\wedge\theta_2)$ is also parallel, hence harmonic and we have three linearly independent harmonic $1$-forms.
Therefore, the first Betti number can take the values $b=b_1(M)\in\{0,1,3\}$ only.

If $b_1(M)=3$, then the Abel-Jacobi map yields a covering $M\to T^3$.
Thus $M$ is itself diffeomorphic to a torus.
By \cite[Cor.~A on p.~94]{GL}, $M$ must be flat and we are in Case~\eqref{c:3D:flat}.
Alternatively, one may argue that the tangent bundle of $M$ is trivialized by parallel vector fields.
Since they satisfy $(\Ric\theta,\theta)=0$ this shows $\Ric\equiv0$, hence $M$ is flat.

Let $b_1(M)=1$.
By Lemma~\ref{lem:twisted}, $M$ is isometric to a twisted product $\Sigma\tp S^1$ where $\Sigma$ is a connected compact oriented surface.
Since $\Sigma$ is a totally geodesic submanifold of $M$ it must have curvature $\ge0$.
If $\Sigma$ is a torus, then $\Sigma$ is flat.
Hence $M$ is flat and we are again in Case~\eqref{c:3D:flat}.
If $\Sigma$ is a sphere, then we are in Case~\eqref{c:3D:twisted}.

Let $b_1(M)=0$.
It is only in this case that we use Hamilton's theorem.
If $M$ is diffeomorphic to a flat manifold, then again by  \cite[Cor.~A]{GL}, $M$ must be flat and we are in Case~\eqref{c:3D:flat}.
If $M$ is diffeomorphic to a quotient of $S^2\times\R$, then $M$ must be diffeomorphic to $S^2\times S^1$ or to $\R\P^3\sharp\R\P^3$ \cite[p.~457]{Sc}.
Now $M$ cannot be diffeomorphic to $S^2\times S^1$ because $b_1(S^2\times S^1)=1$.
If $M$ is diffeomorphic to $\R\P^3\sharp\R\P^3$ or to a spherical spaceform, then we are in Case~\eqref{c:3D:sphere}.
This concludes the proof of the corollary.

\begin{rem}
Hamilton's theorem is based on Ricci flow and is a highly nontrivial result.
Without referring to Hamilton's theorem and Ricci flow, the above proof still yields a weaker result.
Namely, it shows that for any connected compact oriented $3$-dimensional Riemannian manifold $M$ with $\Ric\ge0$ one of the following holds:
\begin{enumerate}
\item
$M$ is a rational homology $3$-sphere;
\item
$M$ is isometric to a twisted product $S^2\tp S^1$ where $S^2$ carries a metric of nonnegative curvature;
\item
$M$ is flat.
\end{enumerate}
The point of Hamilton's theorem is that the rational homology spheres occurring in Case~\eqref{c:3D:sphere} must be spherical spaceforms or $\R\P^3\sharp \R\P^3$.
Of course, there are further rational homology $3$-spheres such as one of the six diffeomorphism types of flat $3$-manifolds.
\end{rem}

\section{Proof of Theorem~\ref{thm:4D}}
\label{sec:4D}

Let $M$ be a geometrically formal connected compact oriented $4$-dimensional Riemannian manifold with nonnegative sectional curvature, $K\ge0$.
The first Betti number can take the values $b=b_1(M)\in\{0,1,2,4\}$.
We consider the possibilities separately.

\subsection{Case \emph{b}\,=\,4}
Since $M$ satisfies $\Ric\ge0$, the $4$ linearly independent harmonic $1$-forms are parallel, hence $M$ is flat.
We are in Case~\eqref{c:4D:flat} of Theorem~\ref{thm:4D}.

\subsection{Case \emph{b}\,=\,2}
Arguing as in the proof of the corollary to Hamilton's theorem we conclude that $M$ is isometric to $\Sigma\tp T^2$ where $\Sigma$ is a connected compact oriented surface with curvature~$\ge0$.
If $\Sigma$ is a torus, we are in Case~\eqref{c:4D:flat} of Theorem~\ref{thm:4D}, if $\Sigma$ is a sphere we are in Case~\eqref{c:4D:twisted22} of Theorem~\ref{thm:4D}.

\subsection{Case \emph{b}\,=\,1}
Again as in the proof of the corollary we get that $M$ is isometric to $\Sigma\tp S^1$ where now $\Sigma$ is a connected compact oriented $3$-manifold with $K\ge0$.
By the corollary, $\Sigma$ can be flat or a spherical spaceform or $\R\P^3\sharp\R\P^3$ or be isometric to $S^2\tpt S^1$.
If $\Sigma$ is flat, then $M$ is flat and we are in Case~\eqref{c:4D:flat} of Theorem~\ref{thm:4D}.
If $\Sigma$ is a spherical spaceform or $\R\P^3\sharp\R\P^3$, then we are in Case~\eqref{c:4D:twisted31} of Theorem~\ref{thm:4D}.

We show that $\Sigma=S^2\tpt S^1$ is not possible.
Assume that $\Sigma$ is isometric to $S^2\tpt S^1$ where $S^2$ carries a metric with curvature $\ge0$.
Now $M$ fibers over $S^1$ with totally geodesic fibers $S^2\tpt S^1$ and the fibers fiber over $S^1$ with totally geodesic fibers $S^2$.
Thus $M$ carries a totally geodesic foliation with leaves diffeomorphic to $S^2$.
Since $M$ is locally isometric to $S^2\times \R\times\R$ we get a second $2$-dimensional totally geodesic foliation on $M$, perpendicular to the first one, with flat leaves.

Since $M$ has a harmonic $1$-form without zeros, the Euler number of $M$ vanishes, $\chi(M)=0$.
From $b_0(M)=b_1(M)=b_3(M)=b_4(M)=1$ we conclude $b_2(M)=0$.

On the other hand, the area $2$-form $\alpha$ of the first foliation (with leaves $S^2$) is parallel since both foliations are totally geodesic.
Hence $\alpha$ is harmonic and represents a nontrivial cohomology class in $H^2(M,\R)$.
This contradicts $b_2(M)=0$ showing that the subcase $\Sigma=S^2\tpt S^1$ cannot occur.

\subsection{Case \emph{b}\,=\,0}
\label{ssec:00}

A priori, $b_2^\pm(M)$ could take the values $0,1,2$, and $3$ but Kotschick has shown \cite[p.~527]{K1} that only $0$ and $1$ can occur for geometrically formal $4$-manifolds with $b_1(M)=0$.
We have to consider the various possibilities.

\emph{Subcase $b_2^+(M)=b_2^-(M)=0$}:
In this case $M$ is a rational homology sphere and we are in Case~\eqref{c:4D:sphere} of Theorem~\ref{thm:4D}.
Since the Euler number of $M$ is $\chi(M)=2\neq0$, \cite[Cor.~9.4]{CG} implies that $\pi_1(M)$ is finite.
If $M$ is simply connected, Freedman's theorem \cite{F} implies that $M$ is homeomorphic to $S^4$.
Thus we are in Case~\eqref{c:4D1:sphere} of the corollary to Theorem~\ref{thm:4D}.

\emph{Subcase $b_2^+(M)=1$ and $b_2^-(M)=0$}:
In this case $M$ is a rational homology $\C\P^2$.
Moreover, $M$ is a symplectic manifold, the symplectic form being given by the harmonic selfdual $2$-form (which has no zeros, by geometric formality).
Furthermore, $M$ cannot be flat because it has positive Euler number $3$.
Thus the scalar curvature is nonnegative and not identically zero, hence $M$ admits a metric with positive scalar curvature.
Now the classification of symplectic $4$-manifolds carrying a metric with positive scalar curvature (\cite[Thm.~C]{L} or \cite[Thm.~1.1]{OO}) implies that $M$ is diffeomorphic to a blow-up of $\C\P^2$ or of a ruled surface.
Among those manifolds only $\C\P^2$ itself is a rational homology $\C\P^2$.
Hence $M$ is diffeomorphic to $\C\P^2$.
(We learned this symplectic argument from the proof of Theorem~8 in \cite{K3}.)

\emph{Subcase $b_2^+(M)=0$ and $b_2^-(M)=1$}:
After reversing the orientation of $M$, this subcase reduces to the previous subcase.

\emph{Subcase $b_2^+(M)=b_2^-(M)=1$}:
First we observe that die Euler number is given by 
\begin{align*}
\chi(M)
&= b_0(M)-b_1(M)+b_2^+(M)+b_2^-(M)-b_3(M)+b_4(M) \\
&= 1-0+1+1-0+1=4.
\end{align*}
Now let $\omega^+$ be a harmonic selfdual $2$-form and $\omega^-$ a harmonic antiselfdual \mbox{$2$-form}.
Since harmonic forms have constant length, we may assume $|\omega^\pm|\equiv 1$.
Now \mbox{$\eta:=\omega^+ + \omega^-$} satisfies
\begin{align*}
\eta\wedge\eta 
&= \omega^+\wedge\omega^+ + \omega^-\wedge\omega^- 
= \left(\<\omega^+,*\omega^+\>+\<\omega^-,*\omega^-\>\right)\vol  \\
&= \left(|\omega^+|^2-|\omega^-|^2\right)\vol
=0 .
\end{align*}
This implies that $\eta$ is decomposable at each point of $M$.

Denote the curvature endomorphism in the Bochner formula for $2$-forms by $\K$, i.e.,
\[
\Delta = \na^*\na + \K
\]
on $\Omega^2(M)$.
For decomposable $2$-forms $\K$ has a nice expression.
If we write, at a given point $p\in M$, $\eta=\sqrt{2}\cdot e_1\wedge e_2$ where $e_1,e_2,e_3,e_4$ is a suitable orthonormal basis of $T_p^*M$, then formula (3) in \cite[p.~354]{S0} for $\K$ easily implies
\[
\<\eta,\K\eta\>
= 
2\left(K_{13} + K_{14} + K_{23} + K_{24}\right) .
\]
Here $K_{ij}$ denotes the sectional curvature of the plane spanned by $e_i$ and $e_j$.
Hence
\[
0=(\Delta\eta,\eta)\ge\|\na\eta\|^2.
\]
Thus $\eta$ is parallel.
The same reasoning shows that \mbox{$\eta':=\omega^+ - \omega^-$} is parallel, hence we have two perpendicular parallel decomposable $2$-forms.
This corresponds to an orthogonal splitting $TM=E_1\oplus E_2$ of the tangent bundle into parallel oriented plane bundles.
Parallelity implies involutivity, hence the distributions $E_1$ and $E_2$ are integrable.
Parallelity of $E_1$ and $E_2$ also implies that the second fundamental form of the leaves vanishes, i.e., the corresponding foliations $\F_1$ and $\F_2$ of $M$ are totally geodesic.
Moreover, the universal covering $\widetilde M$ of $M$ is isometric to $F_1\times F_2$ equipped with the product metric where $F_i$ are simply connected complete surfaces with curvature $\ge0$.
Under the projection $\widetilde M\to M$, the factors of $\widetilde M=F_1\times F_2$ are mapped onto the leaves of the foliations $\F_1$ and $\F_2$, respectively.

If $F_1$ and $F_2$ are diffeomorphic to $S^2$, then either $M=F_1\times F_2$ and we are in Case~\eqref{c:4D:prod} of Theorem~\ref{thm:4D} or $M$ is a proper quotient of $S^2\times S^2$.
Since $S^2\times S^2$ has Euler number~$4$, the Euler number of $M$ would have to be $2$ or $1$ if $M$ were a proper quotient.
Since $\chi(M)=4$, this cannot be the case.

It remains to see what happens if $F_1$ or $F_2$ (or both) is diffeomorphic to $\R^2$.
We show that this cannot occur.
Namely, if one of the factors is diffeomorphic to $\R^2$, then $\widetilde M$ is diffeomorphic to $\R^4$ or to $S^2\times \R^2$.
In either case, $\pi_1(M)$ is infinite and the Cheeger-Gromoll splitting theorem \cite[Cor.~9.4]{CG} implies $\chi(M)=0$.
This contradicts $\chi(M)=4$ and concludes the proof of Theorem~\ref{thm:4D}.

\section{Proof of Theorem~\ref{thm:norm}}
\label{sec:norm}

We start by observing that the assumption $|d|\omega|| \leq \sqrt{8\kappa}\cdot |\omega|$ implies that $\omega$ vanishes nowhere (unless it is identically zero).
Namely, pick $p\in M$ with $\omega(p)\neq 0$ and assume that the zero locus of $\omega$ is nonempty.
Let $q$ be a closest point to $p$ where $\omega$ vanishes.
Join $p$ and $q$ by a shortest geodesic $c:[0,L]\to M$, parametrized by arc-length.
Here $c(0)=p$, $c(L)=q$, and $L$ is the Riemannian distance of $p$ and $q$.
Now consider the function $f:[0,L)\to \R^+$ given by $f(t)=1/|\omega(c(t))|$.
We compute
\[
f' = \frac{-\<\dot{c},d|\omega|\>}{|\omega|^2} \leq \frac{|\dot{c}|\cdot|d|\omega||}{|\omega|^2} \leq \frac{\sqrt{8\kappa}}{|\omega|} = \sqrt{8\kappa}\cdot f.
\]
The Gronwall lemma implies $f(t)\leq f(0)\cdot\exp(\sqrt{8\kappa}t)$, in other words, \mbox{$|\omega(c(t))| \ge |\omega(p)|\cdot\exp(-\sqrt{8\kappa}t)$}.
This contradicts $|\omega(c(t))|\to 0$ as $t\to L$.

The crucial point in the proof of Theorem~\ref{thm:norm} is to show that $b_2^+(M)$ and $b_2^-(M)$ cannot be both positive.
Let us assume the contrary so that we can find a nontrivial selfdual $2$-form $\omega^+$ and a nontrivial antiselfdual $2$-form $\omega^-$.

We consider the form bundles $\Lambda^kT^*M\otimes \Lambda^-T^*M$ twisted with the bundle of antiselfdual $2$-forms together with its natural connection induced by the Levi-Civita connection.
Let $\dm$ be the exterior differential on this twisted form bundle.
Let $\Dm:=\dm+(\dm)^*$ be the associated generalized Dirac operator acting on sections of $\Lambda^*T^*M\otimes \Lambda^-T^*M=\bigoplus_{k=0}^4\Lambda^kT^*M\otimes \Lambda^-T^*M$.
We apply $\Dm$ to $\omega^+\otimes\omega^-$ and we get, using that $\omega^+$ is harmonic,
\begin{align}
|\Dm(\omega^+\otimes\omega^-)|^2
&=
\big|(d+d^*)\omega^+ \otimes \omega^- + \sum_{k=1}^4 e_k\cdot\omega^+\otimes\nabla_k\omega^-\big|^2 \notag\\
&=
\sum_{k,\ell=1}^4 \<e_k\cdot\omega^+,e_\ell\cdot\omega^+\>\<\nabla_k\omega^-,\nabla_\ell\omega^-\> \notag\\
&=
|\omega^+|^2\cdot\sum_{k=1}^4 |\nabla_k\omega^-|^2 \notag\\
&=
|\omega^+|^2\cdot|\nabla\omega^-|^2 . \label{eq:D-o+o-}
\end{align}
Here $e_1,\ldots,e_4$ denotes a local orthonormal tangent frame and \mbox{$e_k\cdot\omega = e_k^*\wedge\omega - e_k\lrcorner\omega$} is the Clifford multiplication.
Since the local frame can be chosen such that, up to a multiple, $\omega^+ = e_1\wedge e_2 + e_3\wedge e_4$, we see that $e_k\cdot\omega^+$ and $e_\ell\cdot\omega^+$ are perpendicular unless $k=\ell$.

For the covariant derivative of $\omega^+\otimes\omega^-$ we obtain
{\allowdisplaybreaks
\begin{align}
|\nabla(\omega^+\otimes&\omega^-)|^2
=
\sum_{k=1}^4 |\nabla_k\omega^+\otimes\omega^-+\omega^+\otimes\nabla_k\omega^-|^2  \notag\\
&=
|\nabla\omega^+|^2\cdot |\omega^-|^2 + |\omega^+|^2\cdot |\nabla\omega^-|^2 
+ 2 \sum_{k=1}^4 \<\nabla_k\omega^+,\omega^+\>\<\omega^-,\nabla_k\omega^-\> \notag\\
&=
|\nabla\omega^+|^2\cdot |\omega^-|^2 + |\omega^+|^2\cdot |\nabla\omega^-|^2 
+ \frac12 \sum_{k=1}^4 \partial_k|\omega^+|^2\cdot\partial_k|\omega^-|^2 \notag\\
&=
|\nabla\omega^+|^2\cdot |\omega^-|^2 + |\omega^+|^2\cdot |\nabla\omega^-|^2 
+ \frac12 \<d|\omega^+|^2,d|\omega^-|^2\> \notag\\
&=
|\nabla\omega^+|^2\cdot |\omega^-|^2 + |\omega^+|^2\cdot |\nabla\omega^-|^2 
+ 2 \<d|\omega^+|,d|\omega^-|\> |\omega^+|\cdot|\omega^-|  \notag\\
&\ge
|\nabla\omega^+|^2\cdot |\omega^-|^2 + |\omega^+|^2\cdot |\nabla\omega^-|^2 
- 2 |d|\omega^+||\cdot|d|\omega^-||\cdot |\omega^+|\cdot|\omega^-| .
\label{eq:nabla-o+o-}
\end{align}
}%
By the refined Kato inequality for harmonic $2$-forms in $4$ dimensions \cite[Thm.~1]{S2} we have
\[
|d|\omega^\pm|| \leq \sqrt{\frac23} \cdot |\nabla\omega^\pm|.
\]
This yields the estimate
\begin{align}
|d|\omega^+||\cdot|d|\omega^-||\cdot |\omega^+|\cdot|\omega^-| 
&\le
\frac23 \cdot |\nabla\omega^+|\cdot |\nabla\omega^-|\cdot |\omega^+|\cdot|\omega^-|  \notag\\
&\le
\frac13 \left( |\nabla\omega^+|^2\cdot |\omega^-|^2 + |\omega^+|^2\cdot |\nabla\omega^-|^2  \right) .  \label{eq:Absch1}
\end{align}
By the assumption in Theorem~\ref{thm:norm} we also have the estimate
\begin{equation}
|d|\omega^+||\cdot|d|\omega^-||\cdot |\omega^+|\cdot|\omega^-| 
\le
8\kappa\cdot |\omega^+|^2\cdot|\omega^-|^2 . \label{eq:Absch2}
\end{equation}
We use estimate \eqref{eq:Absch1} for $\frac32 |d|\omega^+||\cdot|d|\omega^-||\cdot |\omega^+|\cdot|\omega^-|$ and \eqref{eq:Absch2} for \mbox{$\frac12 |d|\omega^+||\cdot|d|\omega^-||\cdot |\omega^+|\cdot|\omega^-|$} in \eqref{eq:nabla-o+o-} and we obtain
\begin{equation}
|\nabla(\omega^+\otimes\omega^-)|^2
\ge
\frac12\left(|\nabla\omega^+|^2\cdot |\omega^-|^2 + |\omega^+|^2\cdot |\nabla\omega^-|^2 \right)
- 4\kappa |\omega^+|^2\cdot|\omega^-|^2 .
\label{eq:Absch3}
\end{equation}
Let $\K$ be the curvature endomorphism in the Bochner formula for the Hodge-Laplacian on forms.
Then the Weitzenb\"ock formula for $\Dm$ says (see \cite[Thm.~8.17]{LM} and its proof)
\begin{equation}
(\Dm)^2 = \nabla^*\nabla + \K\otimes\id_{\Lambda^-} + \sum_{i<j} e_i\cdot e_j \otimes R^{\Lambda^-}(e_i,e_j) .
\label{eq:weitz}
\end{equation}
Here $e_i\cdot e_j$ acts by Clifford multiplication on the first tensor factor.
Now we observe for each summand in the last term applied to $\omega^+\otimes \omega^-$:
\begin{align*}
\<e_i\cdot e_j\cdot\omega^+\otimes R(e_i,e_j)\omega^-,\omega^+\otimes\omega^- \>
&=
\<e_i\cdot e_j\cdot\omega^+,\omega^+\>\<R(e_i,e_j)\omega^-,\omega^-\> \\
&=
-\<´e_j\cdot\omega^+,e_i\cdot \omega^+\>\<R(e_i,e_j)\omega^-,\omega^-\> \\
&=
0 .
\end{align*}
Thus \eqref{eq:weitz} yields
\begin{equation}
\|\Dm(\omega^+\otimes\omega^-)\|^2
=
\|\nabla(\omega^+\otimes\omega^-)\|^2 + \int_M\<\K\omega^+,\omega^+\>|\omega^-|^2 \, d\vol .
\label{eq:wo+o-}
\end{equation}
Inserting \eqref{eq:D-o+o-} and \eqref{eq:Absch3} into \eqref{eq:wo+o-} we find
\begin{align}
\frac12 \int_M|\omega^+|^2\cdot|\nabla\omega^-|^2 \, d\vol
\ge&
\frac12 \int_M |\nabla\omega^+|^2\cdot |\omega^-|^2 \, d\vol
- 4\kappa \int_M |\omega^+|^2\cdot|\omega^-|^2 \, d\vol \notag\\
& + \int_M\<\K\omega^+,\omega^+\>|\omega^-|^2 \, d\vol .
\label{eq:Absch4}
\end{align}
Interchanging the roles of $\omega^+$ and $\omega^-$ we obtain
\begin{align}
\frac12 \int_M|\nabla\omega^+|^2 \cdot|\omega^-|^2\, d\vol
\ge&
\frac12 \int_M|\omega^+|^2 \cdot  |\nabla\omega^-|^2\, d\vol
- 4\kappa \int_M |\omega^+|^2\cdot|\omega^-|^2 \, d\vol \notag\\
& + \int_M|\omega^+|^2\<\K\omega^-,\omega^-\> \, d\vol .
\label{eq:Absch5}
\end{align}
Adding \eqref{eq:Absch4} and \eqref{eq:Absch5} we get
\begin{equation}
8\kappa \int_M |\omega^+|^2\cdot|\omega^-|^2 \, d\vol
\ge
\int_M\left(\<\K\omega^+,\omega^+\>|\omega^-|^2 +|\omega^+|^2\<\K\omega^-,\omega^-\> \right)\, d\vol .
\label{eq:Absch6}
\end{equation}
As in the previous section we see that $\eta=|\omega^-|\cdot\omega^+ + |\omega^+|\cdot\omega^-$ is a decomposable $2$-form and we obtain again
\[
\<\eta,\K\eta\> \ge 4\kappa|\eta|^2 = 8\kappa |\omega^+|^2|\omega^-|^2 .
\]
Now $\<\eta,\K\eta\>$ is precisely the integrand on the RHS of \eqref{eq:Absch6} and we obtain the opposite inequality 
\[
\int_M\left(\<\K\omega^+,\omega^+\>|\omega^-|^2 +|\omega^+|^2\<\K\omega^-,\omega^-\> \right)\, d\vol\ge
8\kappa \int_M |\omega^+|^2\cdot|\omega^-|^2 \, d\vol .
\]
Thus we have equality in \eqref{eq:Absch6} and therefore we must have equality in all estimates which we used to derive \eqref{eq:Absch6}.
In particular, we have $|d|\omega^+||\equiv \sqrt{8\kappa}|\omega^+|$.
On the other hand, $|\omega^\pm|$ must achieve its maximum at some point, a contradiction.

We have shown $b_2^+(M)=0$ or $b_2^-(M)=0$.
In other words, the intersection form of $M$ is positive or negative definite.
If $b_2^+(M)=b_2^-(M)=0$, then $M$ is a simply-connected homology $4$-sphere, hence homeomorphic to $S^4$ \cite{F}.
Upon reversing the orientation if necessary, we may assume $b_2^+(M)\ge1$ and $b_2^-(M)=0$.
Any nontrivial selfdual harmonic $2$-form is a symplectic form because it has no zeros.
By the classification of symplectic manifolds carrying a metric with positive scalar curvature (\cite[Thm.~C]{L} or \cite[Thm.~1.1]{OO}) $M$ is diffeomorphic to a blow-up of $\C\P^2$ or of a ruled surface.
Among those complex surfaces, $\C\P^2$ is the only simply-connected one with definite intersection form.
Hence $M$ must be diffeomorphic to $\C\P^2$.
As mentioned in Section~\ref{sec:4D}, this symplectic argument was already used in the proof of Theorem~8 in \cite{K3}.
This concludes the proof of Theorem~\ref{thm:norm}.

\section{Proof of Theorem~\ref{thm:alt}}
\label{sec:alt}

We define the $(0,4)$-tensor field $T:=W-\frac{\scal}{12}\cdot g \owedge g$ where $W$ is the Weyl curvature, $g$ is the Riemannian metric, and $\owedge$ denotes the Kulkarni-Nomizu product, see \cite[Def.~1.110]{Bes2}.
For any selfdual $2$-form $\omega$ it was shown in the proof of Prop.~3 in \cite{B} that
\begin{align*}
\frac{|\<(\nabla_X\K)\omega,\omega\>|}{|\omega|^2}
=\,& |(\nabla_X T)(e_1,e_2,e_1,e_2)+2(\nabla_X T)(e_1,e_2,e_3,e_4)  \\
& +(\nabla_X T)(e_3,e_4,e_3,e_4)|
\end{align*}
where $X$ is any tangent vector and $e_1,\ldots,e_4$ a suitable orthonormal tangent basis.
If $X$ has unit length we conclude
\begin{align*}
\frac{|\<(\nabla_X\K)\omega,\omega\>|}{|\omega|^2}
\le\,& 
|(\nabla_X W^+)(e_1,e_2,e_1,e_2)+2(\nabla_X W^+)(e_1,e_2,e_3,e_4)  \\
& +(\nabla_X W^+)(e_3,e_4,e_3,e_4)| + \bigg|\frac{\partial_X\scal}{12}\bigg|\cdot
|(g\owedge g)(e_1,e_2,e_1,e_2) \\
&+2(g\owedge g)(e_1,e_2,e_3,e_4)  +(g\owedge g)(e_3,e_4,e_3,e_4)| \\
\le\,&
4|\nabla W^+|_{(5)} + \frac{1}{12}|d\scal|_{(1)}\cdot(2+2\cdot0 +2) \\
=\,&
4|\nabla W^+|_{(5)} + \frac{1}{3}|d\scal|_{(1)} \\
\le\,& \frac{16}{\pi}.
\end{align*}
It is shown in \cite{B} that this estimate together with $K\ge1$ implies that the intersection form of $M$ is definite.
By Donaldson's theorem \cite{D}, the intersection form must be diagonalizable over $\Z$.
Freedman's theorem implies that $M$ is homeomorphic to a $k$-fold connected sum of $\C\P^2$'s with $k\in\{0,1,2,\ldots\}$.
In particular, the total Betti number of $M$ is $\sum_{m=0}^4 b_m(M) = 2+k$.
By a result of Gromov \cite{G}, the total Betti number of a nonnegatively curved $n$-manifold is bounded by a constant $C(n)$ only depending on the dimension of $M$.
Abresch \cite[p.~477]{A} showed that the constant $C(n)=\exp(6n^3+9n^2+4n+4)$ does the job.
In $4$ dimensions this yields $\sum_{m=0}^4 b_m(M) \le 10^{238}$ and concludes the proof of Theorem~\ref{thm:alt}.

\end{document}